\newtheorem{thm}{Theorem}[section]
\newtheorem{lem}[thm]{Lemma}
\newtheorem{cor}[thm]{Corollary}
\newtheorem{rem}{Remark}[section]
\newtheorem*{prob*}{Problem}
\def\N{\mathbb{N}}
\def\Z{\mathbb{Z}}
\def\F{\mathbb{F}}
\def\PP{\mathbb{P}}
\def\1{\mathbbm{1}}
\def\an {\text{\, and \,}}
\def\rank{\mathrm{rank}}
\def\GL{\mathrm{GL}}
\def\modulo{\mathrm{\,mod\,}}
\def\XXint#1#2#3{{\setbox0=\hbox{$#1{#2#3}{\int}$ }
\vcenter{\hbox{$#2#3$ }}\kern-.6\wd0}}
\begin{document}

\title{Truncation of Haar random matrices in $\mathrm{GL}_N(\mathbb{Z}_m)$}

\date{\today}

\author%[authorlabel1]
{Yanqi Qiu}
\address%[authorlabel1]
{Yanqi QIU: CNRS, Institut de Math{\'e}matiques de Toulouse, Universit{\'e} Paul Sabatier, 118 Route de Narbonne, F-31062 Toulouse Cedex 9, France}

\email{yqi.qiu@gmail.com}

\thanks{This work is supported by the grant IDEX UNITI - ANR-11-IDEX-0002-02, financed by Programme ``Investissements d'Avenir'' of the Government of the French Republic managed by the French National Research Agency.}

\begin{abstract}
The asymptotic law of the truncated $S\times S$ random submatrix of a Haar random matrix in $\mathrm{GL}_N(\mathbb{Z}_m)$ as $N$ goes to infinity is obtained. The same result is also obtained when $\mathbb{Z}_m$ is replaced by any commutative compact local ring whose maximal ideal is topologically closed. 
\end{abstract}

\subjclass[2010]{Primary 60B20; Secondary 15B33, 60B10}
\keywords{Random matrix, invertible matrix,  commutative compact local ring,  truncation, asymptotic law}

\maketitle

\section{Introduction}
In the theory of random matrices, some particular attentions are payed recently to the asymptotic distributions of those truncated $S\times S$ upper-left corner of a large $N\times N$ random matrices from different matrix ensembles (CUE, COE, Haar Unitary Ensembles, Haar Orthogonal Ensembles), see \cite{TRU-1, TRU-2, TRU-3, TRU-4}. 

In the present paper, we consider the truncation of a Haar random matrix in $\GL_N(\Z_m)$ with $\Z_m = \Z/m\Z$. This research is motivated by its application in a forthcoming paper on the classification of ergodic measures on the space of infinite $p$-adic matrices, where the asymptotic law of a fixed size truncation of the Haar random matrix from the group of  $N\times N$ invertible matrices over the ring of $p$-adic integers is essentially used and is derived from a particular case of our main result, Theorem \ref{thm-main}. Remark that the ring of $p$-adic integers is isomorphic to the inverse limit of the rings $\Z_{p^n}$.

\section{Notation}
Fix a positive integer  $m\in\N$. Consider the ring $\Z_m : =\Z/m\Z$. Let $\Z_m^{\times}$ be the multiplicative group of invertible elements of the ring $\Z_m$. 
For any $N \in\N$, denote by $M_N(\Z_m)$ the matrix ring over $\Z_m$ and denote by $\GL_N(\Z_m)$ the finite  group of $N \times N$ invertible matrices over $\Z_m$.  Note that we have 
$$
\GL_N(\Z_m) = \Big\{A \in M_N(\Z_m) \Big| \det A  \in \Z_m^{\times}\Big\}.
$$ 
Let $\mathcal{U}_N(m)$ denote the uniform distribution on $M_N(\Z_m)$  and  let $\mu_N(m)$ denote the  uniform distribution on $\GL_N(\Z_m)$.  Note that $\mu_N(m)$ is the normalized Haar measure of the group $\GL_N(\Z_m)$.

The cardinality of any finite set $E$ is denoted by $|E|$.

\section{Main result}\label{sec-main}
Fix a positive integer $S\in\N$.  If $X$ is a $N\times N$ matrix (in what follows, the range of the coefficients of $X$ can vary), then we denote 
by $X[S]$  the truncated upper-left $S\times S$ corner of $X$, i.e.,  
\begin{align}\label{corner}
X[ S]: =(X_{ij})_{1 \le i, j \le S}
\end{align}
Let $X^{(N)}(m)$ be a random matrix sampled with respect to the normalized Haar measure of $\GL_N(\Z_m)$, that is, the probability distribution $\mathcal{L}(X^{(N)}(m))$  of the random matrix $X^{(N)}(m)$ satisfies 
$$
\mathcal{L}(X^{(N)}(m)) = \mu_N(m).
$$   
By adapting the notation \eqref{corner}, we denote by $X^{(N)}(m)[S] $ the truncated upper-left $S\times S$ corner of the random matrix $X^{(N)}(m)$, i.e.,  
$$
X^{(N)}(m)[S]: =\Big(X^{(N)}(m)_{ij}\Big)_{1 \le i, j \le S}
$$

\begin{thm}\label{thm-main}
The probability distribution $\mathcal{L}(X^{(N)}(m)[S])$ of the truncated random matrix $X^{(N)}(m)[S]$ converges weakly, as $N$ tends to infinity,  to the uniform distribution $\mathcal{U}_S(m)$ on $M_S(\Z_m)$. 
\end{thm}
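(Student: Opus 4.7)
The plan is to reduce the problem in stages: first from $\Z_m$ to the prime-power case $\Z_{p^k}$ via the Chinese Remainder Theorem, then from $\Z_{p^k}$ to the residue field $\F_p$ via a Hensel-type lifting, and finally to prove the theorem over $\F_p$ by a direct combinatorial count. Since $M_S(\Z_m)$ and $\GL_N(\Z_m)$ are both finite, weak convergence is equivalent to the pointwise convergence $\mathbb{P}(X^{(N)}(m)[S] = A) \to m^{-S^2}$ for every $A \in M_S(\Z_m)$, and it suffices to establish this.

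For the first reduction, the decomposition $\Z_m \cong \prod_{p \mid m} \Z_{p^{v_p(m)}}$ gives $\GL_N(\Z_m) \cong \prod_{p \mid m} \GL_N(\Z_{p^{v_p(m)}})$ as groups, under which the Haar measure factors as a product and truncation acts componentwise; it therefore suffices to treat $m = p^k$. For the second reduction, I would use that $X \in M_N(\Z_{p^k})$ is invertible if and only if its reduction $\overline X$ modulo $p$ is invertible over $\F_p$. Since each $\overline X \in \GL_N(\F_p)$ with $\overline X[S] = \overline A$ admits exactly $p^{(k-1)(N^2 - S^2)}$ lifts $X \in \GL_N(\Z_{p^k})$ with $X[S] = A$, and since $|\GL_N(\Z_{p^k})| = p^{(k-1)N^2}|\GL_N(\F_p)|$, one obtains the exact identity
$$
\mathbb{P}\bigl(X^{(N)}(p^k)[S] = A\bigr) \;=\; p^{-(k-1)S^2}\, \mathbb{P}\bigl(X^{(N)}(p)[S] = \overline A\bigr),
$$
so convergence to $p^{-kS^2}$ on the left reduces to convergence to $p^{-S^2}$ on the right.

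For the remaining case over $\F_p$, I would count $\#\{X \in \GL_N(\F_p): X[S] = A\}$ by parametrising through the first $S$ rows. Writing $X = \begin{pmatrix} A & B \\ C & D \end{pmatrix}$, invertibility forces the top block row $[A \mid B]$ to have rank $S$; once it does, the remaining $N-S$ rows can be chosen freely so long as they extend $[A \mid B]$ to a basis of $\F_p^N$, and the number of such choices, $\prod_{i=S}^{N-1}(p^N - p^i)$, depends on neither $A$ nor $B$. A short computation parametrised by $r = \rank A$ yields
$$
\#\bigl\{B \in M_{S,N-S}(\F_p) : \rank[A \mid B] = S\bigr\} \;=\; p^{r(N-S)} \prod_{i=0}^{S-r-1}(p^{N-S} - p^i),
$$
and after dividing by $|\GL_N(\F_p)| = \prod_{i=0}^{N-1}(p^N - p^i)$ one obtains an explicit formula for $\mathbb{P}(X^{(N)}(p)[S] = A)$.

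The main technical point, though elementary, is the resulting asymptotic: the rank-dependent numerator satisfies $p^{r(N-S)} \prod_{i=0}^{S-r-1}(p^{N-S} - p^i) \sim p^{S(N-S)}$ and the truncated denominator satisfies $\prod_{i=0}^{S-1}(p^N - p^i) \sim p^{SN}$, so the ratio tends to $p^{-S^2}$ uniformly in $r$, as required. The most delicate part of the argument is thus verifying that this limit is achieved \emph{independently} of $\rank A$; once this is in hand, chaining the three reductions yields Theorem~\ref{thm-main}.
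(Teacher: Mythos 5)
Your proposal is correct and follows essentially the same three-stage reduction as the paper: Chinese Remainder to reduce $\Z_m$ to prime powers $\Z_{p^k}$, the Hillar--Rhea criterion (a matrix over $\Z_{p^k}$ is invertible iff its reduction mod $p$ is invertible over $\F_p$) to reduce to $\F_p$, and a combinatorial count over $\F_p$. The only substantive difference is in the final counting step. You compute the exact number of completions of a fixed corner $A$ to a matrix in $\GL_N(\F_p)$, parametrised by $r=\rank A$, as $p^{r(N-S)}\prod_{i=0}^{S-r-1}(p^{N-S}-p^i)\cdot\prod_{i=S}^{N-1}(p^N-p^i)$, and deduce $\PP(X^{(N)}(p)[S]=A)\to p^{-S^2}$ directly; you also isolate a clean exact lifting identity $\PP(X^{(N)}(p^k)[S]=A)=p^{-(k-1)S^2}\,\PP(X^{(N)}(p)[S]=\overline A)$. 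The paper instead bounds the count from above by allowing all $B\in M_{S,N-S}(\F_p)$ and from below by restricting to $B$ of full row rank (a sufficient condition for $[A\mid B]$ to have rank $S$), and then observes that the ratio $\PP(X^{(N)}[S]=W_1)/\PP(X^{(N)}[S]=W_2)$ is squeezed to $1$; since $M_S(\Z_m)$ is finite, this ratio statement already forces convergence to the uniform law without ever computing the limiting value. Both arguments are correct and of comparable difficulty; your exact rank-parametrised count is a modest refinement, while the paper's two-sided bound and ratio trick avoids the rank case analysis entirely.
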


For any  positive integer $u\in\N$, we write $Q_u: \Z\rightarrow \Z_u =\Z/u\Z$ the quotient map.  If $v$ is another positive integer such that  $u$ divides $v$, then since $v\Z \subset u \Z = \ker (Q_u)$, the map $Q_u$ induces in a unique way a map $Q_u^v: \Z_v\rightarrow \Z_u$. 
Note that the map $Q_u^v: \Z_v\rightarrow \Z_u$ is surjective and  
\begin{align}\label{preimage}
\Big| (Q_u^v)^{-1}(x) \Big|= \frac{v}{u}, \forall x \in\Z_u,
\end{align}
that is, for each element $ x \in \Z_u$, the cardinality of  the pre-image of $x$ is $v/u$. 

By slightly abusing the notation, for any matrix $A = (a_{ij})_{1\le i, j \le N}$ in $M_N(\Z)$, we set
$$ Q_{u} (A) : = (Q_{u}(a_{ij}))_{1 \le i, j \le N}.$$ Similarly, for any matrix $B = (b_{ij})_{1\le i, j \le N}$  in $M_N(\Z_v)$, we set $$ Q_{u}^v (B) : = (Q_{u}^v(b_{ij}))_{1 \le i, j \le N}.$$

  By the prime factorization theorem, we may write in a unique way  
  \begin{align}\label{factor}
  m = p_1^{r_1} \cdots p_s^{r_s},
  \end{align}
  where $p_1,\cdots, p_s$ are distinct prime numbers and $r_1, \cdots, r_s $ are positive integers.  By the Chinese remainder theorem, we have an isomorphism of the following two rings:
\begin{align}
\Z_m \simeq \Z_{p_1^{r_1}} \oplus  \cdots \oplus  \Z_{p_s^{r_s}}.
\end{align}
A natural isomorphism is provided by the map $\phi: \Z_m \longrightarrow \Z_{p_1^{r_1}} \oplus  \cdots \oplus  \Z_{p_s^{r_s}}$  defined by
\begin{align}\label{iso}
\phi(x) = (Q^m_{p_1^{r_1}}(x), \cdots, Q^m_{p_s^{r_s}}(x) )  \quad (\forall x \in \Z_m).
\end{align}

{\flushleft \it  Simple case:} Let us first assume that in the factorization \eqref{factor}, we have $s =1$. For simplifying the notation, let us write $m = p^{r}$. 

Writing $\F_p$ for the finite field $\Z/p\Z$. We have the following characterization of $\GL_N(\Z_{p^r})$.  
\begin{thm}[{\cite[Theorem 3.6]{Aut-abelian}}]
A matrix $M$ is in $\GL_N(\Z_{p^r})$  if and only if $Q_{p}^{p^r}(M) \in\GL_N(\F_p)$. 
\end{thm}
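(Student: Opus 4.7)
The strategy is to route invertibility through the determinant, using two elementary ingredients: (i) the quotient map $Q_p^{p^r}$ is a ring homomorphism and hence commutes with $\det$; (ii) the local ring $\Z_{p^r}$ has maximal ideal $p\Z_{p^r}$, so its units are exactly the elements lying outside $p\Z_{p^r}$. The definition $\GL_N(\Z_{p^r}) = \{A \in M_N(\Z_{p^r}) : \det A \in \Z_{p^r}^{\times}\}$ recalled earlier in the paper is exactly what lets us pass between matrix invertibility and the arithmetic of the single scalar $\det M$.

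First I would verify that $Q_p^{p^r} : \Z_{p^r} \to \F_p$ is a well-defined surjective ring homomorphism: since $p^r \Z \subset p\Z = \ker(Q_p)$, the map $Q_p: \Z \to \F_p$ descends uniquely to $\Z_{p^r}$, and its kernel in $\Z_{p^r}$ is the image of $p\Z$, namely $p\Z_{p^r}$. Applying this map entrywise produces a ring homomorphism $M_N(\Z_{p^r}) \to M_N(\F_p)$, which I will continue to denote by $Q_p^{p^r}$. Because $\det$ is a polynomial with integer coefficients in the matrix entries, any ring homomorphism commutes with it; in particular
\begin{align*}
Q_p^{p^r}(\det M) \;=\; \det\bigl(Q_p^{p^r}(M)\bigr).
\end{align*}

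Next I would record the characterization of units in $\Z_{p^r}$: for any $x \in \Z_{p^r}$ with lift $\tilde{x} \in \Z$, one has $x \in \Z_{p^r}^{\times}$ iff $\gcd(\tilde{x}, p^r) = 1$ iff $p \nmid \tilde{x}$ iff $Q_p^{p^r}(x) \in \F_p^{\times}$. Combining this with the displayed identity above gives the chain of equivalences
\begin{align*}
M \in \GL_N(\Z_{p^r})
&\iff \det M \in \Z_{p^r}^{\times} \\
&\iff Q_p^{p^r}(\det M) \in \F_p^{\times} \\
&\iff \det\bigl(Q_p^{p^r}(M)\bigr) \in \F_p^{\times} \\
&\iff Q_p^{p^r}(M) \in \GL_N(\F_p),
\end{align*}
which is the claim.

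There is essentially no obstacle here; the only substantive input is the identification of the units of $\Z_{p^r}$ with the complement of its unique maximal ideal $p\Z_{p^r}$, reflecting that $\Z_{p^r}$ is a local ring. I would mention in passing that the naturality of $\det$ under ring homomorphisms is what makes both directions symmetric: the forward implication does not need one to exhibit an inverse of $Q_p^{p^r}(M)$ explicitly, and the backward implication does not require any Hensel-type lifting of an $\F_p$-inverse back to $\Z_{p^r}$, as both are absorbed into the scalar statement about $\det M$.
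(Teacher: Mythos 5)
Your proof is correct. Note, though, that the paper does not actually prove this statement at all: it is imported as a citation to Hillar--Rhea \cite[Theorem 3.6]{Aut-abelian}, so there is no in-paper argument to compare against. What your route buys is a short, self-contained verification: reduce everything to the scalar $\det M$ via the definition $\GL_N(\Z_{p^r}) = \{A : \det A \in \Z_{p^r}^{\times}\}$, use that $Q_p^{p^r}$ is a ring homomorphism (so it commutes with the integer polynomial $\det$), and use that the units of the local ring $\Z_{p^r}$ are exactly the elements outside the maximal ideal $p\Z_{p^r}$. All three ingredients are available in the paper, and the chain of equivalences is airtight; in particular you are right that no explicit lifting of an inverse from $\F_p$ to $\Z_{p^r}$ is needed, since the adjugate-type statement is absorbed into the unit criterion for $\det M$. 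It is worth observing that your argument uses nothing specific to $\Z_{p^r}$ beyond its being a commutative local ring with maximal ideal $p\Z_{p^r}$: verbatim the same reasoning gives $M \in \GL_N(\mathscr{A})$ if and only if $M$ is invertible modulo $\mathfrak{m}$ for the rings $\mathscr{A}$ of Section 4, which is essentially the content the paper re-derives by hand in Lemma \ref{lem-disjoint}; so your proof not only replaces the citation but also unifies it with the later generalization.
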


Given a matrix $W \in M_S(\Z_{p^r})$ such that $Q^{p^r}_p(W) \in\GL_S(\F_p)$, then a moment of thinking  allows us to write 
\begin{align}
\Big|\big\{X \in \GL_N (\F_p): X(S)=  Q^{p^r}_p(W) \big\} \Big| =  p^{S(N-S)} \cdot \prod_{j = 0}^{N-S-1} (p^N - p^{S+j}),
\end{align}
where $p^{S(N-S)}$ the number of choices of $(X_{ij})_{1 \le i \le S, \, S+1 \le j \le N} $ with coefficients in $\F_p$ and $\prod_{j = 0}^{N-S-1} (p^N - p^{S+j})$ is the number of choices of $(X_{ij})_{S+1 \le i \le N, \, 1 \le j \le N} $. 

It follows that, for any matrix $W \in M_S(\Z_{p^r})$,  we have 
\begin{align}\label{upper}
\Big|\big\{X \in \GL_N (\F_p): X(S)=  Q^{p^r}_p(W) \big\} \Big| \le   p^{S(N-S)} \prod_{j = 0}^{N-S-1} (p^N - p^{S+j}).
\end{align}
We also have for any matrix $W \in M_S(\Z_{p^r})$, 
\begin{align}\label{lower}
\Big|\big\{X \in \GL_N (\F_p): X(S)=  Q^{p^r}_p(W) \big\} \Big| \ge \prod_{i = 0}^{S-1}  (p^{N-S} - p^{i}) \prod_{j = 0}^{N-S-1} (p^N - p^{S+j}),
\end{align}
where $\prod_{i = 0}^{S-1}  (p^{N-S} - p^{i})$ is the number of choices of $(X_{ij})_{1 \le i \le S, \, S+1 \le j \le N} $ with coefficients in $\F_p$ such that
$$
\rank \Big[(X_{ij})_{1 \le i \le S, \, S+1 \le j \le N}\Big] = S.
$$

Recall that $X^{(N)}(m)$ is a random matrix sampled with respect to the Haar measure of $\GL_N(\Z_m) = \GL_N(\Z_{p^r})$. By combining \eqref{preimage}, \eqref{upper} and \eqref{lower}, we see that the cardinality 
$$
n_N(W): = \Big|\big\{X \in \GL_N (\Z_{p^r}): X(S)=  W \big\} \Big|
$$
satisfies the relation 
\begin{align*}
  & p^{r-1} \prod_{i = 0}^{S-1}  (p^{N-S} - p^{i}) \prod_{j = 0}^{N-S-1} (p^N - p^{S+j}) \le n_N(W)  
\\
& \le  p^{r-1}  p^{S(N-S)} \prod_{j = 0}^{N-S-1} (p^N - p^{S+j}).
\end{align*}
As a consequence, for any $W_1, W_2 \in M_S(\Z_{p^r})$, the following relation holds:
\begin{align}\label{estimate}
\frac{\prod_{i = 0}^{S-1}  (p^{N-S} - p^{i})}{ p^{S(N-S)}} \le \frac{\PP(X^{(N)}(m)[S] = W_1)}{\PP( X^{(N)}(m)[S] = W_1)} \le \frac{ p^{S(N-S)}}{\prod_{i = 0}^{S-1}  (p^{N-S} - p^{i})}.
 \end{align}
 Hence we get 
 \begin{align}\label{limit-1}
 \lim_{N\to\infty}\frac{\PP(X^{(N)}(m)[S] = W_1)}{\PP(X^{(N)}(m)[S]  = W_1)}  = 1.
 \end{align}
Since the set $M_S(\Z_m)$ is finite, the above equality \eqref{limit-1} implies that  $\mathcal{L}(X^{(N)}(m)[S])$ converges weakly, as $N$ tends to infinity,  to the uniform distribution $\mathcal{U}_S(m)$ on $M_S(\Z_m)$.

{\flushleft \it General case:}
It is clear that, for any $N\in\N$, the isomorphism $\phi$ defined in \eqref{iso} induces in a natural way a ring isomorphism:
\begin{align}
\phi_N: M_N (\Z_m) \xrightarrow{\quad \simeq \quad } M_N (\Z_{p_1^{r_1}}) \oplus  \cdots \oplus  M_N(\Z_{p_s^{r_s}}).
\end{align}
The restriction of $\phi_N$ on $\GL_N(\Z_m)$ induces a group isomorphism:
\begin{align}
\phi_N: \GL_N (\Z_m) \xrightarrow{\quad \simeq \quad } \GL_N (\Z_{p_1^{r_1}}) \oplus  \cdots \oplus  \GL_N(\Z_{p_s^{r_s}}).
\end{align}
Obviously, we have 
\begin{align}
(\phi_N)_{*}  (\mathcal{U}_N(m)) &= \mathcal{U}_N(p_1^{r_1}) \otimes  \cdots \otimes  \mathcal{U}_N(p_s^{r_s})
\end{align}
and
 \begin{align}
 (\phi_N)_{*}  (\mu_N(m)) & = \mu_N(p_1^{r_1}) \otimes  \cdots \otimes  \mu_N(p_s^{r_s}).
\end{align}
In particular, if $X^{(N)}(p_1^{r_1}), \cdots, X^{(N)}(p_s^{r_s})$ are independent Haar random matrices in $\GL_N (\Z_{p_1^{r_1}}), \cdots, \GL_N (\Z_{p_s^{r_s}})$ respectively, then the random matrix 
$$
\phi_N^{-1} (X^{(N)}(p_1^{r_1}) \oplus \cdots \oplus X^{(N)}(p_s^{r_s}) )
$$
is a Haar random matrix in $\GL_N (\Z_m)$.  Moreover, we have 
$$
\phi_N^{-1} (X^{(N)}(p_1^{r_1}) \oplus \cdots \oplus X^{(N)}(p_s^{r_s}) )[S]  = \phi_S^{-1} (X^{(N)}(p_1^{r_1}) [S]   \oplus \cdots \oplus X^{(N)}(p_s^{r_s})  [S]  ).
$$
Hence $X^{(N)}(m)[S]$ and $\phi_S^{-1} (X^{(N)}(p_1^{r_1}) [S]   \oplus \cdots \oplus X^{(N)}(p_s^{r_s})  [S] )$ are identically distributed.  By the previous result, we know that for any $i = 1, \cdots, s$, the law of $X^{(N)}(p_i^{r_i}) [S]$ converges weakly to the uniform distribution $\mathcal{U}_S(p_i^{r_i})$ on $M_S (\Z_{p_i^{r_i}})$. It follows that the law of $X^{(N)}(m)[S]$ converges weakly to 
$$
(\phi_S^{-1})_{*}  (\mathcal{U}_S(p_1^{r_1}) \otimes  \cdots \otimes  \mathcal{U}_S(p_s^{r_s})) = \mathcal{U}_S(m).
$$
We thus complete the proof of Theorem \ref{thm-main}. 
%%%%%%%%%
%%%%%%%%%
%%%%%%%%%
%%%%%%%%%
%%%%%%%%%
%%%%%%%%%
%%%%%%%%%
%%%%%%%%%
%%%%%%%%%
\section{A generalization}

Let $\F_q$ denote the finite field with cardinality $q = p^n$. Consider the Haar random matrix $Z^{(N)}$ in $\GL_N(\F_q)$. Then we have 
\begin{thm}\label{thm-2}
The probability distribution $\mathcal{L}(Z^{(N)}[S])$ of the truncated random matrix $Z^{(N)}[S]$ converges weakly, as $N$ tends to infinity,  to the uniform distribution on $M_S(\F_q)$. 
\end{thm}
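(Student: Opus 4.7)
The plan is to follow the ``simple case'' argument from the proof of Theorem \ref{thm-main} essentially verbatim, with $\F_p$ replaced by $\F_q$ and with the prefactor $p^{r-1}$ coming from \eqref{preimage} dropped, since here the coefficient ring is the field $\F_q$ itself rather than a proper quotient $\Z/p^r\Z$. Everything reduces to counting matrices in $\GL_N(\F_q)$ with a prescribed upper-left $S\times S$ corner and showing that this count depends only weakly on the corner.

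First I would fix $W \in M_S(\F_q)$ and study the cardinality
$$n_N(W) := \Big|\big\{X \in \GL_N(\F_q) : X[S] = W\big\}\Big|.$$
Since $\F_q$ is a field, $X \in \GL_N(\F_q)$ if and only if its rows are linearly independent; in particular the top $S$ rows $[W \mid T]$ (where $T$ ranges over the top-right $S \times (N-S)$ block) must have rank $S$. Conditional on the top $S$ rows having rank $S$, the bottom $N-S$ rows can be built one at a time, each chosen outside the span of the previously selected rows, contributing exactly $\prod_{j=0}^{N-S-1}(q^N - q^{S+j})$ completions, a quantity which does not depend on $W$.

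Next I would write $n_N(W) = A_N(W)\cdot \prod_{j=0}^{N-S-1}(q^N - q^{S+j})$, where $A_N(W)$ is the number of $T\in M_{S\times(N-S)}(\F_q)$ for which $[W\mid T]$ has rank $S$. The trivial upper bound $A_N(W) \le q^{S(N-S)}$ and the lower bound $A_N(W) \ge \prod_{i=0}^{S-1}(q^{N-S}-q^i)$ (obtained by restricting to $T$ of full row rank $S$, which automatically forces $[W\mid T]$ to have rank $S$) yield, for any $W_1,W_2\in M_S(\F_q)$,
$$\frac{\prod_{i=0}^{S-1}(q^{N-S}-q^i)}{q^{S(N-S)}} \;\le\; \frac{\PP(Z^{(N)}[S]=W_1)}{\PP(Z^{(N)}[S]=W_2)} \;\le\; \frac{q^{S(N-S)}}{\prod_{i=0}^{S-1}(q^{N-S}-q^i)}.$$

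Finally, as $N\to\infty$, both bounds converge to $1$ since $\prod_{i=0}^{S-1}\bigl(1 - q^{i-(N-S)}\bigr) \to 1$. Therefore $\PP(Z^{(N)}[S] = W_1)/\PP(Z^{(N)}[S] = W_2) \to 1$ for every pair $W_1,W_2$, and since $M_S(\F_q)$ is finite this forces $\PP(Z^{(N)}[S] = W) \to 1/|M_S(\F_q)| = q^{-S^2}$ uniformly in $W$, which is exactly weak convergence to the uniform distribution. There is no substantive obstacle here; the argument is a direct transcription of the simple case proof of Theorem \ref{thm-main} with $p$ replaced by $q$, and the only thing to verify is the elementary counting of full-rank rectangular matrices over $\F_q$.
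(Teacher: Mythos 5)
Your proposal is correct and follows exactly the route the paper intends: the paper's proof of Theorem \ref{thm-2} simply asserts that a ``similar estimate as \eqref{estimate}'' holds and omits the details, and you have supplied precisely those details by transcribing the simple-case argument for $\GL_N(\Z_{p^r})$ to $\GL_N(\F_q)$ with the lift factor $p^{r-1}$ removed. The counting of completions, the two-sided bound on $A_N(W)$, and the conclusion from the ratio tending to $1$ on a finite state space all match the paper's approach.
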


\begin{proof}
By combinatoric arguments, we have a similar estimate as \eqref{estimate} and the proof of Theorem \ref{thm-2} then follows immediately. Here we omit the details. 
\end{proof}

Let $(\mathscr{A}, +, \cdot)$ be a topological commutative  ring with identity which is compact, thus by assumption, the two operations $+, \cdot: \mathscr{A} \times \mathscr{A} \rightarrow \mathscr{A}$ are both  continuous. Assume also that  $\mathscr{A}$ is a local ring.  Recall that  by local ring, we mean that $\mathscr{A}$ admits a unique maximal ideal. Let us denote the maximal ideal of $\mathscr{A}$ by $\mathfrak{m}$.  If we denote by $\mathscr{A}^{\times} $  the multiplicative group of the $\mathscr{A}$, then we have  $\mathfrak{m} =\mathscr{A}\setminus \mathscr{A}^{\times}.$
Moreover, let us assume that $\mathfrak{m}$ is closed. 

\begin{rem}
If $m = p_1^{r_1}\cdots p_s^{r_s}$ with $s \ge 2$, then the ring $\Z_m$ is not local. Thus the results in \S \ref{sec-main} is not a particular case of Theorem \ref{thm-3}. 
\end{rem}

 Denote by $\nu_\mathscr{A}$ the normalized Haar measure on the compact additive group $(\mathscr{A}, +)$.  

\begin{lem}[{\cite[Lemma 3]{Warner}}]
The quotient ring $\mathscr{A}/\mathfrak{m}$ is a finite field. 
\end{lem}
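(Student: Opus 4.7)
The plan is to realize $K := \mathscr{A}/\mathfrak{m}$ as a compact Hausdorff topological field and then to conclude from its topology and algebra that $K$ must be discrete, hence finite.

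First I would endow $K$ with the quotient topology under the canonical projection $\pi: \mathscr{A} \to K$. Since $\mathfrak{m}$ is maximal, $K$ is a field as an abstract ring, and since $\pi$ is continuous and surjective and $\mathscr{A}$ is compact, $K$ is compact. Because $\mathfrak{m}$ is a closed subgroup of the abelian topological group $(\mathscr{A}, +)$, the standard fact that a quotient of a topological group by a closed normal subgroup is Hausdorff yields that $K$ is Hausdorff. Continuity of $+$ and $\cdot$ on $\mathscr{A}$ passes to $K$, so $K$ is a compact Hausdorff topological ring that happens to be a field.

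Next I would argue that $K$ is discrete, which combined with compactness forces $|K|<\infty$. Since $\{0\}$ is closed in the Hausdorff space $K$, the unit group $K^{\times}=K\setminus\{0\}$ is open, so it suffices to show $\{0\}$ is open as well. I would argue by contradiction: if $0$ were not isolated, one could select (for example, by indexing over a base of open neighbourhoods of $0$ directed by reverse inclusion) a net $(x_\alpha)\subset K^{\times}$ with $x_\alpha\to 0$; by compactness of $K$, the net $(x_\alpha^{-1})$ would admit a convergent subnet $x_\beta^{-1}\to y\in K$, and the continuity of multiplication on $K$ would give
\[
1=\lim_\beta x_\beta\cdot x_\beta^{-1}=\Big(\lim_\beta x_\beta\Big)\cdot\Big(\lim_\beta x_\beta^{-1}\Big)=0\cdot y=0,
\]
contradicting $1\neq 0$ in the field $K$. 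Hence $\{0\}$ is open, $K$ is discrete, and a discrete compact space is finite.

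The main subtlety in this plan is the use of nets (rather than sequences) when extracting a convergent subnet in the last step, since the quotient topology on $K$ is not assumed to be first countable; once one accepts this device, the argument reduces to the one-line identity displayed above, together with routine topological bookkeeping for the quotient structure on $K$.
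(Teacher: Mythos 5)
The paper does not give its own proof of this lemma; it is invoked by citation to Warner's \emph{Compact noetherian rings} (Lemma~3). Your argument is therefore a genuine addition rather than a reproduction, and it is in essence correct. Passing to $K=\mathscr{A}/\mathfrak{m}$ with the quotient topology, using that $\mathfrak{m}$ is closed to get a compact Hausdorff topological ring, and then showing that a compact Hausdorff topological ring which happens to be a field must be discrete (hence finite) is exactly the right strategy. The net argument is sound: it only uses continuity of \emph{multiplication} on $K\times K$ (inherited from $\mathscr{A}$), not continuity of inversion, which is important since $K$ is not a priori a topological field in the stronger sense; a subnet of $(x_\alpha^{-1})$ converging to some $y$ forces $1=\lim x_\beta x_\beta^{-1}=0\cdot y=0$, a contradiction. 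Combined with compactness, discreteness gives finiteness.

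One small logical slip worth correcting: you write that since $K^\times=K\setminus\{0\}$ is open, ``it suffices to show $\{0\}$ is open as well.'' The openness of $K^\times$ is irrelevant here. The reason that $\{0\}$ being open implies discreteness is translation homogeneity of the additive topological group $(K,+)$: for each $x\in K$ the map $y\mapsto y+x$ is a homeomorphism, so $\{x\}=x+\{0\}$ is open whenever $\{0\}$ is, and hence every singleton is open. With that justification substituted, the proof is complete. You might also note explicitly that the Haar-measure hypothesis on $(\mathscr{A},+)$ in the paper presupposes that $\mathscr{A}$ is Hausdorff, which is what makes your appeal to ``quotient by a closed subgroup is Hausdorff'' applicable.
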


As a consequence, there exists a positive integer $q = p^n$ with $p$   a prime number  and $n$ a positive integer,  such that $|\mathscr{A}/\mathfrak{m}| = q$ and $\mathscr{A}/\mathfrak{m} \simeq \F_q$. Let $\{a_{i}: i = 0, \cdots, q-1\}$ be a subset of $\mathscr{A}$ which forms a complete set of representatives of $\mathscr{A}/\mathfrak{m}$, assume moreover that $a_0 =0\in \mathscr{A}$. 

From now on,  as a set, we will identify $\{a_{i}: i = 0, \cdots, q-1\}$ with $\F_q$. For instance, under this identification, we may write
$$
\mathscr{A} = \bigsqcup_{i = 0}^{q-1} (a_i + \mathfrak{m}) = \bigsqcup_{x \in\F_q} (x + \mathfrak{m}),
$$
we also identify the following subset of $M_N(\mathscr{A})$: 
\begin{align}\label{identification}
\Big\{ X  = (X_{ij})_{1\le i, j \le N} \Big| X_{ij}  \in \{a_{i}: 0\le i \le q-1\}, \det X \in  \mathscr{A}^{\times}   \Big\}
\end{align}
with $\GL_N(\F_q)$. 

Since $\mathscr{A}^{\times} $  is closed, indeed, the  group of invertible matrices over $\mathscr{A}$:
$$
\GL_N(\mathscr{A}) = \Big\{A \in M_N(\mathscr{A}) \Big| \det A  \in \mathscr{A}^{\times}\Big\},
$$
as a closed subset of $M_N(\mathscr{A})$, is compact. As a consequence, we may speak of Haar random matrix in $\GL_N(\mathscr{A})$, let  $Y^{(N)}$ be such a random matrix. We would like to study the asymptotic law of the truncated random matrix $Y^{(N)}[S]$ as $N$ goes to infinity.

\begin{lem}\label{lem-disjoint}
We have 
\begin{align}\label{disjoint}
\GL_N(\mathscr{A}) = \bigsqcup_{X \in \GL_N(\F_q)}  (X+ M_N(\mathfrak{m})), 
\end{align}
where we identify $\GL_N(\F_q)$ with the set given by \eqref{identification}.
\end{lem}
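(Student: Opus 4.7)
The plan is to deduce the decomposition from a single tool: the reduction-modulo-$\mathfrak{m}$ homomorphism $\pi : \mathscr{A} \to \mathscr{A}/\mathfrak{m} \simeq \F_q$, which extends entrywise to a surjective ring homomorphism $\pi_N : M_N(\mathscr{A}) \to M_N(\F_q)$ commuting with the determinant, combined with the local-ring identity $\mathfrak{m} = \mathscr{A} \setminus \mathscr{A}^\times$. Disjointness of the cosets $X + M_N(\mathfrak{m})$ indexed by the matrices $X$ in the set \eqref{identification} is then immediate: a non-empty intersection of $X_1 + M_N(\mathfrak{m})$ and $X_2 + M_N(\mathfrak{m})$ forces $X_1 - X_2 \in M_N(\mathfrak{m})$, so $(X_1)_{ij} - (X_2)_{ij} \in \mathfrak{m}$ for each $i, j$; since the $a_i$ are representatives of \emph{distinct} cosets of $\mathfrak{m}$, this compels $(X_1)_{ij} = (X_2)_{ij}$, hence $X_1 = X_2$.

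For the inclusion $\bigsqcup_X (X + M_N(\mathfrak{m})) \subseteq \GL_N(\mathscr{A})$, I would fix $X$ in the set \eqref{identification} and $E \in M_N(\mathfrak{m})$. Since $\pi_N(E) = 0$, the homomorphism property yields
$$\pi(\det(X+E)) = \det(\pi_N(X+E)) = \det(\pi_N(X)) = \pi(\det X).$$
By hypothesis $\det X \in \mathscr{A}^\times$, so $\det X \notin \mathfrak{m}$, and hence $\det(X+E) \notin \mathfrak{m}$. By locality this gives $\det(X+E) \in \mathscr{A}^\times$, so $X + E \in \GL_N(\mathscr{A})$.

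For the reverse inclusion, I would take any $A \in \GL_N(\mathscr{A})$ and use the partition $\mathscr{A} = \bigsqcup_{i=0}^{q-1} (a_i + \mathfrak{m})$ to write each entry uniquely as $A_{ij} = a_{k(i,j)} + e_{ij}$ with $e_{ij} \in \mathfrak{m}$. Setting $X := (a_{k(i,j)})_{1 \le i, j \le N}$ and $E := A - X$, one has $E \in M_N(\mathfrak{m})$ and $A = X + E$. The same reduction computation gives $\pi(\det X) = \pi(\det A)$, and since $\det A \in \mathscr{A}^\times$ is a unit we deduce $\det X \in \mathscr{A}^\times$, placing $X$ inside the identified copy of $\GL_N(\F_q)$. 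I do not foresee any genuine obstacle: the lemma is purely algebraic, resting entirely on the fact that $\pi_N$ is a ring homomorphism commuting with $\det$, together with the characterization of non-units in a local ring. The standing assumption that $\mathfrak{m}$ is closed plays no role here; it will enter only later, when pushing forward Haar measure onto the clopen decomposition \eqref{disjoint}.
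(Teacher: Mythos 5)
Your proof is correct and follows essentially the same route as the paper: reduce modulo $\mathfrak{m}$, use the fact that this reduction commutes with $\det$, and invoke the local-ring characterization $\mathfrak{m} = \mathscr{A}\setminus\mathscr{A}^\times$ to pass between units and non-vanishing residues. The only cosmetic difference is that you phrase the congruence $\det(X+E)\equiv\det X\ (\mathrm{mod}\ \mathfrak{m})$ explicitly through the entrywise homomorphism $\pi_N$, and you spell out the disjointness and the independence from the closedness hypothesis a bit more carefully than the paper does.
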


\begin{proof}
It is easy to see that for any $X \in\GL_N(\F_q)$ and any $X'\in M_N(\mathfrak{m}) $, we have 
$$
\det (X + X') \equiv \det X (\modulo \mathfrak{m}),
$$
hence $\det (X+X') \in \mathscr{A}^{\times}$. This implies that the set on the right hand side of \eqref{disjoint} is contained in $\GL_N(\mathscr{A})$.  Conversely, an element $A \in \GL_N(\mathscr{A}) \subset M_N(\mathscr{A})$ corresponds naturally to a matrix $X_A \in M_N(\mathscr{A})$ all of whose coefficients are in $\F_q$ (identified with $\{a_i: 0\le i \le q-1\}$) such that 
$$
A \equiv X_A  (\modulo \mathfrak{m}) \an \det A \equiv \det X_A (\modulo \mathfrak{m}).
$$
As a consequence, $\det X_A \in \mathscr{A}^{\times}$ and hence $X_A \in\GL_N(\F_q)$. This shows that $\GL_N(\mathscr{A})$ is contained in the set on the right hand side of \eqref{disjoint}.  

Finally, by the definition of the set $\GL_N(\F_q)$ in \eqref{disjoint}, it is clear that all the subsets $X+ M_N(\mathfrak{m}), X \in \GL_N(\F_q)$ are disjoint. 
\end{proof}

As an immediate consequence of Lemma \ref{lem-disjoint}, we have the following corollary. First recall that we have identified $\GL_N(\F_q)$ with the set \eqref{identification}, hence the random matrix $Z^{(N)}$ may be considered as a random matrix sampled uniformly from the set  \eqref{identification}. Note that $M_N(\mathfrak{m}) \simeq \mathfrak{m}^{N\times N}$ is equipped with the uniform probability 
\begin{align}\label{unif}
(q^{-1} \nu_\mathscr{A}|_{\mathfrak{m}})^{\otimes (N\times N)}. 
\end{align}

\begin{cor}\label{cor-in}
Assume that we are given a random matrix  $U^{(N)}$ sampled uniformly from $M_N(\mathfrak{m})$, which is independent from the random matrix $Z^{(N)}$. The the random matrix 
$$
Z^{(N)} + U^{(N)}
$$
is a Haar random matrix in $\GL_N(\mathscr{A})$. 
\end{cor}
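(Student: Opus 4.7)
The plan is to identify the law of $Z^{(N)}+U^{(N)}$ as the restriction of the additive product Haar measure $\nu_\mathscr{A}^{\otimes N\times N}$ on $M_N(\mathscr{A})$ to the subset $\GL_N(\mathscr{A})$, suitably renormalized, and then to check that this restricted-normalized measure is the (multiplicative) Haar probability measure on the compact group $\GL_N(\mathscr{A})$.

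For the first step, I would invoke Lemma \ref{lem-disjoint} together with the independence of $Z^{(N)}$ and $U^{(N)}$. Since $Z^{(N)}$ is uniform on $\GL_N(\F_q)$ (viewed as the set in \eqref{identification}) and $U^{(N)}$ has distribution \eqref{unif} on $M_N(\mathfrak{m})$, conditioning on $Z^{(N)}=X$ shows that the conditional law of $Z^{(N)}+U^{(N)}$ is the translate by $X$ of the uniform law on $M_N(\mathfrak{m})$; this conditional law is supported on $X+M_N(\mathfrak{m})$ and equals the normalized restriction of $\nu_\mathscr{A}^{\otimes N\times N}$ to that coset. Combining this with the disjoint decomposition \eqref{disjoint} and the fact that all of the cosets $X+M_N(\mathfrak{m})$ have the same $\nu_\mathscr{A}^{\otimes N\times N}$-mass, one sees that $\mathcal{L}(Z^{(N)}+U^{(N)})$ equals the normalization of $\nu_\mathscr{A}^{\otimes N\times N}$ restricted to $\GL_N(\mathscr{A})$.

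For the second step, I would fix $g\in\GL_N(\mathscr{A})$ and show that this restricted-normalized measure is invariant under the left translation $A\mapsto gA$. Since $A\mapsto gA$ is a continuous additive automorphism of the compact group $(M_N(\mathscr{A}),+)$, with continuous inverse $A\mapsto g^{-1}A$, by uniqueness of the Haar probability measure on a compact group it preserves $\nu_\mathscr{A}^{\otimes N\times N}$; since it also maps $\GL_N(\mathscr{A})$ bijectively to itself, it preserves the normalized restriction as well. Invariance under all such left multiplications, combined with the uniqueness of the Haar probability measure on $\GL_N(\mathscr{A})$, identifies $\mathcal{L}(Z^{(N)}+U^{(N)})$ as the Haar probability measure on $\GL_N(\mathscr{A})$.

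The main point of the argument --- what makes the corollary immediate rather than obvious --- is the interplay between the two Haar structures, additive on $M_N(\mathscr{A})$ and multiplicative on $\GL_N(\mathscr{A})$, and in particular the fact that multiplication by a unit matrix preserves the additive Haar measure on $M_N(\mathscr{A})$. Once this is granted, everything else is bookkeeping with the coset decomposition furnished by Lemma \ref{lem-disjoint}.
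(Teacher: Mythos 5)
Your proof is correct and follows essentially the same route the paper has in mind: use the coset decomposition of Lemma~\ref{lem-disjoint} together with the independence of $Z^{(N)}$ and $U^{(N)}$ to identify $\mathcal{L}(Z^{(N)}+U^{(N)})$ with the normalized restriction of $\nu_\mathscr{A}^{\otimes (N\times N)}$ to $\GL_N(\mathscr{A})$. The paper labels the corollary an ``immediate consequence'' of the lemma and supplies no further argument; you usefully make explicit the one genuinely non-trivial step --- that left multiplication by any $g\in\GL_N(\mathscr{A})$ is a homeomorphic additive automorphism of $M_N(\mathscr{A})$, hence preserves the additive Haar measure and maps $\GL_N(\mathscr{A})$ to itself, so the restricted--normalized measure is in fact the (multiplicative) Haar probability measure on $\GL_N(\mathscr{A})$.
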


Note that the distributions of the two random matrices $U^{(S)}$ and $U^{(N)}[S]$ coincide.

\begin{thm}\label{thm-3}
The probability distribution $\mathcal{L}(Y^{(N)}[S])$ of the truncated random matrix $Y^{(N)}[S]$ converges weakly, as $N$ tends to infinity,  to the uniform distribution $\nu_\mathscr{A}^{\otimes (S\times S)}$ on $M_S(\mathscr{A})$.
\end{thm}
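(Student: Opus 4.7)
The strategy is to reduce the $\GL_N(\mathscr{A})$ statement to the $\GL_N(\F_q)$ statement (Theorem \ref{thm-2}) via the additive decomposition provided by Corollary \ref{cor-in}, and then to identify the resulting limit as the Haar probability $\nu_\mathscr{A}^{\otimes (S\times S)}$ using the coset structure $\mathscr{A} = \bigsqcup_{x\in\F_q}(x+\mathfrak{m})$. Concretely, Corollary \ref{cor-in} realizes $Y^{(N)} \stackrel{d}{=} Z^{(N)} + U^{(N)}$ with $Z^{(N)}$ and $U^{(N)}$ independent. Since the corner map $X\mapsto X[S]$ is additive, this identity descends to
\begin{equation*}
Y^{(N)}[S] \,\stackrel{d}{=}\, Z^{(N)}[S] + U^{(N)}[S],
\end{equation*}
and by the observation immediately preceding the theorem, $U^{(N)}[S]$ has the same distribution as $U^{(S)}$ and remains independent of $Z^{(N)}[S]$.

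Next I would pass to the limit $N\to\infty$. The crucial feature is that, under the identification \eqref{identification}, $Z^{(N)}[S]$ takes values in the \emph{finite} subset $M_S(\F_q) \subset M_S(\mathscr{A})$. Hence for every bounded continuous $f:M_S(\mathscr{A})\to \R$ one has
\begin{equation*}
\E\bigl[f(Y^{(N)}[S])\bigr] \;=\; \sum_{X\in M_S(\F_q)} \PP\bigl(Z^{(N)}[S]=X\bigr)\,\E\bigl[f(X+U^{(S)})\bigr],
\end{equation*}
and Theorem \ref{thm-2} gives $\PP(Z^{(N)}[S]=X)\to q^{-S^2}$ for each of the $q^{S^2}$ possible values of $X$. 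Therefore
\begin{equation*}
\lim_{N\to\infty}\E\bigl[f(Y^{(N)}[S])\bigr] \;=\; \E\bigl[f(Z^{(\infty)}+U^{(S)})\bigr],
\end{equation*}
where $Z^{(\infty)}$ is uniform on $M_S(\F_q)$ and independent of $U^{(S)}$.

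The last step is to identify the law of $Z^{(\infty)}+U^{(S)}$ with $\nu_\mathscr{A}^{\otimes(S\times S)}$. Since both summands have product laws over the entries, this reduces to the scalar assertion: if $x$ is uniform on the representative set $\{a_0,\ldots,a_{q-1}\}$ and $u$ is uniform on $\mathfrak{m}$, independent of $x$, then $x+u \sim \nu_\mathscr{A}$. That follows immediately from the decomposition $\mathscr{A}=\bigsqcup_{x\in\F_q}(x+\mathfrak{m})$ together with translation invariance of $\nu_\mathscr{A}$, which force $\nu_\mathscr{A}(x+\mathfrak{m})=1/q$ on each coset. There is no serious obstacle once Corollary \ref{cor-in} and Theorem \ref{thm-2} are in hand; the only minor care needed is the exchange of a weak limit on the finite set $M_S(\F_q)$ with the independent continuous summand $U^{(S)}$, which the explicit finite sum above handles transparently.
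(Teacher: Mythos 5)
Your proposal is correct and follows essentially the same route as the paper's proof: decompose $Y^{(N)}[S] \stackrel{d}{=} Z^{(N)}[S] + U^{(N)}[S]$ via Corollary \ref{cor-in}, invoke Theorem \ref{thm-2} for the limit of $Z^{(N)}[S]$, and identify the resulting convolution law as $\nu_\mathscr{A}^{\otimes(S\times S)}$. The only difference is that you spell out explicitly (via the finite sum over $M_S(\F_q)$ and the coset decomposition $\mathscr{A}=\bigsqcup_{x\in\F_q}(x+\mathfrak{m})$) the two steps the paper states without calculation -- the interchange of weak limit with the independent summand and the final identification of $V^{(S)}+U^{(S)}$ as uniform -- which is a welcome filling-in of detail rather than a different argument.
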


\begin{proof}
By Corollary \ref{cor-in}, the random matrices $Y^{(N)}[S]$ and  $Z^{(N)}[S] + U^{(N)}[S]$ are identically distributed. Now by Theorem \ref{thm-2}, the probability distribution $\mathcal{L}(Z^{(N)}[S])$ converges weakly, as $N$ goes to infinity, to the uniform distribution on $M_S(\F_q)$, hence the probability distribution $\mathcal{L}(Y^{(N)} [S]) = \mathcal{L}(Z^{(N)} [S] + U^{(N)} [S]  )$ converges weakly, as $N$ goes to infinity, to the probability distribution of the random matrix 
$$
V^{(S)} + U^{(S)},
$$
where $V^{(S)}$ and $U^{(S)}$ are independent, $V^{(S)}$ is sampled uniformly from $M_S(\F_q)$ and $U^{(S)}$ is sampled uniformly from $M_N(\mathfrak{m})$.   We complete the proof of Theorem \ref{thm-3} by noting that $V^{(S)} + U^{(S)}$ is uniformly distributed on $M_S(\mathscr{A})$.
\end{proof}

%\bibliography{mybib}
%\bibliographystyle{plain}

\end{document}